\documentclass[a4paper]{amsart}

\usepackage{mathrsfs,amsmath} 
\usepackage{amssymb}

\newtheorem{theorem}{Theorem}
\newtheorem{corollary}{Corollary}

\newtheorem{lemma}{Lemma}
\newtheorem*{lemma*}{Lemma}

\theoremstyle{definition}
\newtheorem{definition}[theorem]{Definition}
\newtheorem{remark}{Remark}

\newcommand{\xii}{|\xi|}
\newcommand{\R}{{\mathbb{R}}}
\newcommand{\N}{{\mathbb{N}}}

\newcommand{\BMO}{{\mathrm{BMO}}}
\newcommand{\PV}{{\mathrm{P.V.}}}
\renewcommand{\div}{{\mathrm{div}}}

\def\<#1\>{\left\langle#1\right\rangle }

\title[A class of $L^1(\R^n)-H^1(\R^n)$ multipliers]{A class of multipliers \\ from $L^1(\R^n)$ to the Hardy space $H^1(\R^n)$}

\author[M. D'Abbicco]{Marcello D'Abbicco}

\address{Department of Mathematics, University of Bari, Via E. Orabona 4, 70125 BARI - ITALY}

\email{marcello.dabbicco@uniba.it}

\keywords{Multipliers, Hardy spaces}

\subjclass{35B45, 35E15, 42B15, 42B30, 42B37}

\begin{document}

\begin{abstract}
In this note, we show a condition on the derivatives of a multiplier $m\in L^\infty$ that guarantees that $m$ is a multiplier from $L^1$ to the Hardy Space $H^1(\R^n)$.
\end{abstract}

\maketitle

\section{Introduction}

The theory of multipliers arise in several fields of mathematical analysis, in particular in the study of partial differential equations in $\R^n$, due to the possibility to use the Fourier transform to reduce a partial differential equation with constant coefficients to an algebraic equation depending on the frequency as a parameter. For instance, if $E(t)\in\mathcal S'(\R^n)$ is the fundamental solution to the initial value problem
\[ \begin{cases}
\partial_t E + \sum_{|\alpha|\leq m} c_\alpha\partial_x^\alpha E =0, \qquad t>0,  \\
E(0)=\delta
\end{cases} \]
after the application of the Fourier transform with respect to the space variable $\mathscr{F}$, it reads as
\[ \begin{cases}
\partial_t \mathscr{F} E + p(\xi)\,\mathscr{F} E =0, \qquad t>0,  \\
\mathscr{F}E(0)=1,
\end{cases} \]
where $p(\xi)=\sum_{|\alpha|\leq m} c_\alpha(i\xi)^\alpha$ (for instance, $p(\xi)=\xii^2$ for the heat equation and $p(\xi)=i\xii^2$ for the Schr\"odinger equation). Then $m(t,\xi)=e^{-tp(\xi)}$ is the multiplier associated to the operator $E(t)\ast$ that maps an initial datum $u_0$ into the solution of the initial value problem $E(t)\ast u_0$.

In a similar way, several other problems may be reduced to the study of (time-dependent) multipliers. One may consider initial value problems with higher order derivatives in time, as wave models, or problems with time-dependent coefficients and/or with more general operators than ordinary derivatives.

It often happens that one may easily obtain properties of the multiplier $m(t,\xi)$, even in absence of information on the properties of $E(t)=\mathscr{F}^{-1}m$ itself.

Fourier analysis provide several tools to show that a multiplier $m\in L^\infty(\R^n)$ is in $M(L^p,L^p)$ for some $p\in(1,\infty)$, that is, that $(\mathscr{F}^{-1}m)\ast$ is a linear bounded operator that maps $L^p$ into itself. In particular, as a consequence of the Mikhlin-H\"ormander multiplier theorem (see~\cite{Mikhlin56} and~\cite[Theorem 2.5]{Hormander-1960}), if $\xii^{|\alpha|}\partial_\xi^\alpha m(\xi)$ are bounded for any $|\alpha|\leq1+n/2$, then $m\in M(L^p,L^p)$ for any $p\in(1,\infty)$. The result remains valid for $M(H^1,H^1)$, where $H^1(\R^n)$ is the Hardy space, and for $M(\BMO,\BMO)$, where $\BMO$ is the dual space of $H^1$, the space of functions with bounded mean oscillations (see~\cite[Theorem E]{Miyachi-1981}; see also~\cite[Theorems 4.6 and 4.7]{CALDERON1977101} and~\cite[Theorems 1 and 2]{Miyachi-1980}).

On the other hand, a multiplier is in $M(L^1,L^1)$ if, and only if, its inverse Fourier transform is a bounded measure~\cite[Theorem 1.4]{Hormander-1960}. Moreover, a multiplier is in $M(L^1,H^1)$ if, and only if, its inverse Fourier transform belongs to $H^1(\R^n)$ (see~\cite[Theorem 3.3]{Miyachi-1981}).

An example of interest is the fundamental solution to the wave equation, that is, $m(\xi)=\xii^{-1}\sin \xii$. Then $m\in M(L^1,L^1)$ in space dimension~$n=3$, since $\mathscr{F}^{-1}m$ is the normalized measure of the unit sphere, and $m\in M(L^p,L^p)$ for $(n-1)|1/p-1/2|\leq1$ if $n\geq4$ (see~\cite{Peral-1980}).

Multipliers in $M(L^1,H^1)$ are of interest in the study of partial differential equations. For instance, if a function is the product of a multiplier in $M(L^1,H^1)$ and a multiplier in $M(H^1,H^1)$ (which may often be easily checked using Mikhlin-H\"ormander theorem), then the function is a multiplier in $M(L^1,H^1)$ and, in particular, it is in $M(L^1,L^1)$. Examples of multipliers that are in $M(H^1,H^1)$ but not in $M(L^1,L^1)$ are $\Phi\, e^{i\omega}$, where $\omega$ is homogeneous of order $1$ and $\Phi\in S^{-\frac{n-1}2}$ (see~\cite[IX, \textsection4]{stein1993harmonic}) or where $\omega$ is homogeneous of order $\sigma\neq1$ and $\Phi\in S^{-\frac{n}2}$ (see~\cite[Theorem 1]{D'Abbicco-2025}).

Multipliers in $M(L^1,H^1)$ are also of interest to prove $L^1-L^1$ estimates for partial differential equations and systems where the Riesz transforms appear in the fundamental solution, as for the system of dispersive elastic waves (or other models with the operator $\nabla \div$).

Our main result is the following.
\begin{theorem}\label{thm:main}
Let $n\geq2$, $m\in\mathcal S'(\R^n)$, be such that its derivatives in distributional sense of order $n-1$, $n$ and $n+1$ are functions, and they verify
\begin{equation}\label{eq:mder}
|\partial_\xi^\gamma m(\xi)|\leq C\xii^{\varepsilon-|\gamma|}\<\xi\>^{-2\varepsilon},\qquad \gamma\in\N^n,\quad |\gamma|=n-1,n,n+1,
\end{equation}
for some~$C>0$ and~$\varepsilon>0$. Then there exists a finite family of coefficients $\{a_\alpha\}_{|\alpha|\leq n-2}$ such that
\begin{equation}\label{eq:mT0}
\mathscr{F}^{-1}m = f + \sum_{|\alpha|\leq n-2} a_\alpha \partial_x^\alpha \delta,
\end{equation}
with $f\in L^1(\R^n)\cap \mathcal C(\R^n\setminus\{0\})$ and $|f(x)|\leq C'|x|^{\varepsilon-n}\<x\>^{-2\varepsilon}$. Moreover, $f\in H^1(\R^n)$ if, and only if, $m(0)=a_0$ (that is, $(\mathscr{F}f)(0)=0$).
\end{theorem}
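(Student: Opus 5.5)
The plan is to obtain everything from the identity $\mathscr{F}^{-1}\big((i\partial_\xi)^\gamma m\big)=x^\gamma\,\mathscr{F}^{-1}m$ (up to sign conventions), applied with $|\gamma|=n-1$. By assumption, $g_\gamma:=\mathscr{F}^{-1}(\partial_\xi^\gamma m)$ is the inverse Fourier transform of a function. That function is bounded by $|\xi|^{\varepsilon-n+1}\langle\xi\rangle^{-2\varepsilon}$, so it is locally integrable near $0$, but it is not integrable at infinity.

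\textbf{Pointwise bound for $g_\gamma$.} Fix $x\neq0$ and a smooth cutoff $\chi$ equal to $1$ on $\{|\eta|\le1\}$ and supported in $\{|\eta|\le2\}$. I split $\partial^\gamma m=\chi(|x|\xi)\partial^\gamma m+(1-\chi(|x|\xi))\partial^\gamma m$; the smooth cutoff avoids boundary terms.
\begin{itemize}
\item For the low-frequency part I simply take the $L^1$ norm over $|\xi|\lesssim 1/|x|$. This gives $\lesssim |x|^{1-\varepsilon}\cdot|x|^{-n+\ldots}$; concretely, it is $\lesssim |x|^{\varepsilon-1}$ for $|x|\le1$ and $\lesssim|x|^{-1-\varepsilon}$ for $|x|\ge1$.
\item For the high-frequency part I integrate by parts twice, writing $e^{ix\xi}=-|x|^{-2}\Delta_\xi e^{ix\xi}$. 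The derivatives then fall either on $\partial^\gamma m$ (order $n$ and $n+1$, where the hypothesis \eqref{eq:mder} applies) or on the cutoff; each derivative on the cutoff gains $|x|$ but is supported where $|\xi|\sim1/|x|$.
\end{itemize}
All resulting integrals are absolutely convergent and are bounded by $|x|^{-2}\int_{|\xi|\gtrsim1/|x|}|\xi|^{\varepsilon-n-1}\langle\xi\rangle^{-2\varepsilon}\,d\xi$ plus analogous cutoff terms. This reproduces the same bounds: $|x|^{\varepsilon-1}$ for small $|x|$ and $|x|^{-1-\varepsilon}$ for large $|x|$. Absolute convergence together with dominated convergence also gives continuity of $g_\gamma$ on $\R^n\setminus\{0\}$. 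The bookkeeping of this splitting and integration by parts is the main technical step. I expect the only delicate point to be justifying the integration by parts for distributions, which I would handle by approximating $m$ by $m\,\psi(\xi/R)$ and letting $R\to\infty$.

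\textbf{Structure of $\mathscr{F}^{-1}m$.} Set $T=\mathscr{F}^{-1}m$. Since $|x|^{n-1}\lesssim\sum_{|\gamma|=n-1}|x^\gamma|$, on $\R^n\setminus\{0\}$ the distribution $T$ coincides with the continuous function $f:=x^{\gamma}g_\gamma/x^\gamma$; this is consistent across the choices of $\gamma$. For example, one may take $f=|x|^{-2(n-1)}\sum_\gamma \binom{n-1}{\gamma}x^\gamma g_\gamma$, using $|x|^{2(n-1)}=\sum_{|\gamma|=n-1}\binom{n-1}{\gamma}x^{2\gamma}$. This $f$ satisfies $|f(x)|\lesssim|x|^{\varepsilon-n}\langle x\rangle^{-2\varepsilon}$, so $f\in L^1$. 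Hence $T-f$ is supported at the origin and equals $\sum_{|\alpha|\le N}a_\alpha\partial^\alpha\delta$ for some finite $N$.

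Moreover, $x^\gamma f=g_\gamma$ holds as distributions on all of $\R^n$: both sides are $L^1_{\mathrm{loc}}$ functions, since $|g_\gamma|\lesssim|x|^{\varepsilon-1}$ near $0$, and they agree a.e. Therefore $x^\gamma(T-f)=0$ for every $|\gamma|=n-1$. Since $x^\gamma\partial^\alpha\delta$ is a nonzero multiple of $\partial^{\alpha-\gamma}\delta$ when $\alpha\ge\gamma$, this forces $a_\alpha=0$ for $|\alpha|\ge n-1$, which gives \eqref{eq:mT0}.

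\textbf{$H^1$ criterion.} Taking Fourier transforms in \eqref{eq:mT0} gives $\hat f(0)=\int f=m(0)-a_0$; note that $\hat f$ is continuous, so $m$ is continuous at $0$. The condition $\int f=0$ is necessary for any $L^1$ function to belong to $H^1$. For sufficiency I would use the molecular characterization of $H^1$ (Taibleson--Weiss). A function $f$ is a multiple of a molecule, hence in $H^1$, if $\int f=0$, $f\in L^q$, and $|x|^{nb}f\in L^q$ for some $q>1$ and $b>1-1/q$.
\begin{itemize}
\item Choose $1<q<n/(n-\varepsilon)$, so that $|x|^{\varepsilon-n}\in L^q$ near $0$.
\item The tail condition $|x|^{nb}|x|^{-n-\varepsilon}\in L^q(|x|>1)$ requires $b<1-1/q+\varepsilon/n$.
\item Near $0$, the factor $|x|^{nb}$ only helps.
\end{itemize}
Thus any $b\in(1-1/q,\,1-1/q+\varepsilon/n)$ works, and $f\in H^1$ exactly when $m(0)=a_0$.
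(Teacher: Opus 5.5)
Your proposal is correct. It follows the paper's overall architecture: Fourier inversion of a polynomially weighted $T=\mathscr F^{-1}m$, splitting at $|\xi|\sim|x|^{-1}$ using exactly the derivatives of order $n-1,n,n+1$, division by the weight, the structure theorem for distributions supported at the origin, and then an $H^1$ criterion. Several steps are carried out differently, though.

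\textbf{Kernel estimate.} The paper starts at order $n$, with $(-\Delta)^{n/2}m$ for $n$ even and $(-\Delta)^{(n-1)/2}\nabla m$ for $n$ odd. This lies in $L^1$, so $|x|^nT$ is immediately a $\mathcal C_0$ function. The paper then uses a sharp cutoff at $|\xi|=|x|^{-1}$ and one integration by parts in each regime: down to order $n-1$ for $|x|\le1$, up to order $n+1$ for $|x|\ge1$. The cost is boundary terms and a separate odd-dimensional case. You start at order $n-1$, where $\partial^\gamma m\notin L^1$, so you need the smooth cutoff and the Laplacian integration by parts to see that $g_\gamma$ is a function.

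\textbf{What your route buys.} The multinomial identity removes the parity split. More substantively, $x^\gamma(T-f)=0$ rigorously eliminates $a_\alpha$ for $|\alpha|\ge n-1$. For $n$ even, $|x|^nT_0=0$ alone only gives $(-\Delta)^{n/2}\mathscr F T_0=0$, which does not kill high-degree coefficients (harmonic polynomials of any degree are allowed). The paper's monomial argument tacitly needs precisely the information you prove: that $x^\beta T$, $|\beta|=n-1$, is an $L^1_{\mathrm{loc}}$ function equal to $x^\beta f$.

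\textbf{$H^1$ criterion.} You invoke the Taibleson--Weiss molecular theorem, and your window $1<q<n/(n-\varepsilon)$, $1-1/q<b<1-1/q+\varepsilon/n$ is right. The paper instead proves Lemmas~\ref{lem:H1} and~\ref{lem:H1large} by hand. It splits off a compactly supported $L^p$ piece and decomposes the tail dyadically into atoms. Your route is shorter; the paper's is self-contained and also yields the quantitative tail bound $\|f\|_{H^1}\lesssim M^{-\varepsilon}$.

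\textbf{Two small technical points.} First, do not approximate $m$ by $m\,\psi(\xi/R)$. The derivatives then hit $\psi(\xi/R)$ and produce lower-order derivatives of $m$, which are not controlled; $m$ may even contain a polynomial of degree $n-2$. Truncate $\partial^\gamma m$ instead. Better still, no limit is needed at all. For fixed $x_0\neq0$, $\chi(|x_0|\xi)\partial^\gamma m\in L^1$. Also $B=(1-\chi(|x_0|\xi))\partial^\gamma m\in L^2$, since $2(n-1)+2\varepsilon>n$, and $\Delta B\in L^1$. Hence $\mathscr F^{-1}B=-|x|^{-2}\mathscr F^{-1}(\Delta B)$ off the origin, directly in $\mathcal S'$. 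The same observation, $\partial^\gamma m\in L^1+L^2$, shows that $g_\gamma\in\mathcal C_0+L^2$ has no part at the origin, which you need for $x^\gamma f=g_\gamma$ on all of $\R^n$. Second, your integrals, for example $\int_1^{1/|x|}r^{-\varepsilon}\,dr\sim|x|^{\varepsilon-1}$, require $\varepsilon<1$. State this reduction explicitly, as the paper does.
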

We postpone the proof of Theorem~\ref{thm:main} to~\textsection\ref{sec:proofmain}.

If we are interested in multipliers that may be bounded on $L^p$ for some $p\in[1,\infty]$, we may assume that $m\in M(L^2,L^2)=L^\infty(\R^n)$.
\begin{corollary}\label{cor:main}
Let $m\in L^\infty(\R^n)$, be such that~\eqref{eq:mder} holds. Then:
\begin{align}
\label{eq:RL}
& \exists \lim_{\xii\to\infty} m(\xi)= a_0,\\
\label{eq:deltaf}
& \mathscr{F}^{-1}m=a_0\delta+f,
\end{align}
with $f$ as in Theorem~\ref{thm:main}. In particular, $m\in M(L^1,L^1)$. Moreover, $m\in M(L^1,H^1)$ if, and only if, $m(0)=0=a_0$.
\end{corollary}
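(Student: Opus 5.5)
We will derive the corollary from Theorem~\ref{thm:main} by showing that boundedness of $m$ kills every higher-order term $a_\alpha\partial_x^\alpha\delta$ with $1\leq|\alpha|\leq n-2$. We then identify $a_0$ with the limit of $m$ at infinity.

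Since $m\in L^\infty\subset\mathcal S'$ and~\eqref{eq:mder} holds, Theorem~\ref{thm:main} gives
\[ \mathscr{F}^{-1}m=f+\sum_{|\alpha|\leq n-2}a_\alpha\partial_x^\alpha\delta. \]
Taking Fourier transforms, this becomes
\[ m(\xi)=\hat f(\xi)+\sum_{|\alpha|\leq n-2}a_\alpha(i\xi)^\alpha. \]
Because $f\in L^1$, the function $\hat f$ is continuous and bounded, and $\hat f(\xi)\to0$ as $\xii\to\infty$ by Riemann--Lebesgue. The difference $m-\hat f$ is therefore a bounded polynomial, hence a constant. So $a_\alpha=0$ for $|\alpha|\geq1$, which gives~\eqref{eq:deltaf}. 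Moreover $m=a_0+\hat f$ almost everywhere. After choosing the continuous representative, $m$ is continuous and $m(\xi)\to a_0$ as $\xii\to\infty$, which is~\eqref{eq:RL}. For $n=2$ the sum already contains only $\alpha=0$, so nothing needs to be done in that case.

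Since $\mathscr{F}^{-1}m=a_0\delta+f$ is a bounded measure, $m\in M(L^1,L^1)$ follows from H\"ormander's characterization~\cite[Theorem 1.4]{Hormander-1960}.

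For the $H^1$ statement we use Miyachi's characterization: $m\in M(L^1,H^1)$ if and only if $\mathscr{F}^{-1}m\in H^1$. We proceed in two directions.

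\textbf{Sufficiency.} If $a_0=0$ and $m(0)=0$, then $\mathscr{F}^{-1}m=f$. The condition $m(0)=a_0$ holds, so $f\in H^1$ by Theorem~\ref{thm:main}.

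\textbf{Necessity.} Suppose $\mathscr{F}^{-1}m=a_0\delta+f\in H^1$. Then $a_0\delta\in L^1$, since both $H^1\subset L^1$ and $f\in L^1$. This forces $a_0=0$. Consequently $f\in H^1$, so Theorem~\ref{thm:main} gives $m(0)=a_0=0$.

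The only delicate point is the meaning of $m(0)$ for an $L^\infty$ function. I would handle it through the continuity just established: $m$ coincides a.e. with $a_0+\hat f$, and $m(0)$ is understood as the value of this continuous representative. This matches the convention $(\mathscr{F}f)(0)=m(0)-a_0$ implicit in Theorem~\ref{thm:main}. I expect no real obstacle beyond making this identification explicit.
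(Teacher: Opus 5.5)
Your proposal is correct and follows the same route as the paper. You apply Theorem~\ref{thm:main}, use the boundedness of $m$ together with Riemann--Lebesgue for $f$ to eliminate the terms $a_\alpha\partial_x^\alpha\delta$ with $\alpha\neq0$ and to identify $a_0$ as the limit at infinity, and then invoke H\"ormander's and Miyachi's characterizations. Beyond that, you supply details the paper leaves implicit: the bounded-polynomial argument for $a_\alpha=0$, the continuous representative $a_0+\mathscr{F}f$ that gives meaning to $m(0)$, and the necessity direction of the $H^1$ equivalence, which the paper's proof does not treat.
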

\begin{proof}
Since $m$ is bounded, we derive $a_\alpha=0$ for any $\alpha\neq0$, so that~\eqref{eq:deltaf} holds. By Riemann-Lebesgue theorem for $f$, we obtain
\[  \lim_{\xii\to\infty} m(\xi) = a_0+ \lim_{\xii\to\infty} \mathscr{F} f(\xi) = a_0, \]
that is, \eqref{eq:RL}.

By~\cite[Theorem 1.4]{Hormander-1960} (indeed, it is sufficient to apply Young theorem for bounded measures), $m\in M(L^1,L^1)$. If $m(0)=0=a_0$, then $m$ is the Fourier transform of $f\in H^1$; hence, by~\cite[Theorem 3.3]{Miyachi-1981}, $m\in M(L^1,H^1)$.

This concludes the proof.
\end{proof}
\begin{remark}
Corollary~\ref{cor:main} applies to pseudo-differential operators $m\in S^{-\varepsilon}$ depending on the variable $\xi$ only; in particular, $\mathscr{F}^{-1}m=f\in L^1$, and $f\in H^1(\R^n)$ if $m(0)=0$. In this sense, Corollary~\ref{thm:main} improves~\cite[Lemma 3.4]{D'Abbicco-2025}.
\end{remark}
The proof of Theorem~\ref{thm:main} is based on the following sufficient condition for a function to be in $H^1(\R^n)$.
\begin{lemma}\label{lem:H1}
Let $f\in L^p(\R^n)$ for some $p>1$, and assume that
\[ \lim_{|x|\to\infty} |x|^{n+\varepsilon}|f(x)|=0, \]
for some $\varepsilon>0$. Then $f\in H^1(\R^n)$ if, and only if, $f$ has zero integral.
\end{lemma}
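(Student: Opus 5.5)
Necessity is standard. If $f\in H^1(\R^n)$, then $f\in L^1$ and its Fourier transform is continuous and vanishes at the origin, so $\int f=0$. (The hypotheses already put $f\in L^1$: near infinity $|f(x)|\le |x|^{-n-\varepsilon}$, which is integrable there, and on any ball $f\in L^p\subset L^1_{\mathrm{loc}}$.)

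For sufficiency we assume $\int f=0$ and show that $f$ is a sum of $H^1$ pieces with summable norms. We will use that a function $g$ supported in a ball $B$ of radius $r$, with $\int g=0$ and $\|g\|_{L^p}\le C_0 r^{-n(1-1/p)}$, is a multiple of a $(1,p)$-atom. Hence
\[
\|g\|_{H^1}\le C\, r^{n(1-1/p)}\|g\|_{L^p}.
\]
This is the atomic decomposition of $H^1$ with $p$-atoms, valid for $1<p\le\infty$.

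First split $f$ with the dyadic annuli $A_0=\{|x|<1\}$ and $A_k=\{2^{k-1}\le|x|<2^k\}$ for $k\ge1$, writing $f=\sum_k f\chi_{A_k}$. Set $\mu_k=\int_{A_k}f$, choose a fixed bump $\phi$ supported in $B(0,1)$ with $\int\phi=1$, and put $\phi_k(x)=2^{-kn}\phi(2^{-k}x)$. Then write
\[
f=\sum_k\bigl(f\chi_{A_k}-\mu_k\phi_k\bigr)+\sum_k\mu_k\phi_k .
\]
Each term $g_k=f\chi_{A_k}-\mu_k\phi_k$ is supported in $B(0,2^k)$ and has zero integral. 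Its $L^p$ norm is controlled as follows:
\[
\|f\chi_{A_k}\|_{L^p}\lesssim \sup_{A_k}|f|\,2^{kn/p}\lesssim 2^{-k(n+\varepsilon)}2^{kn/p},
\qquad
|\mu_k|\,\|\phi_k\|_{L^p}\lesssim 2^{-k\varepsilon}2^{-kn(1-1/p)}.
\]
Consequently $\|g_k\|_{H^1}\lesssim 2^{-k\varepsilon}$ for $k\ge1$, and $g_0$ is a fixed multiple of an atom. So the first series converges in $H^1$.

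For the second series we use Abel summation. Set $M_k=\sum_{j\le k}\mu_j$. Since $\int f=0$, we have $M_k=-\sum_{j>k}\mu_j$, and therefore $|M_k|\lesssim\sum_{j>k}2^{-j\varepsilon}\lesssim 2^{-k\varepsilon}$. Summing by parts,
\[
\sum_k\mu_k\phi_k=\sum_k M_k(\phi_k-\phi_{k+1}).
\]
The boundary term $M_K\phi_{K+1}$ tends to $0$ in $L^1$, because $\|\phi_{K+1}\|_{L^1}=\|\phi\|_{L^1}$ is fixed while $M_K\to0$. Each difference $\phi_k-\phi_{k+1}$ is supported in $B(0,2^{k+1})$, has zero integral, and satisfies $\|\phi_k-\phi_{k+1}\|_{L^p}\lesssim 2^{-kn(1-1/p)}$. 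It is therefore a uniformly bounded multiple of a $p$-atom, and its $H^1$ norm is $O(1)$. The series thus converges in $H^1$ with norm at most $\sum_k|M_k|<\infty$.

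The main obstacle is this bookkeeping. The limits in $L^1$ and in $H^1$ must agree, which holds because $H^1\hookrightarrow L^1$. The decay of the tail sums $M_k$ is exactly where the hypothesis $\int f=0$ and the extra decay $|x|^{-\varepsilon}$ are used; with decay $|x|^{-n}$ alone the sum $\sum|M_k|$ could diverge. Only the decay at infinity is needed: near infinity we use $\sup_{A_k}|f|\le C2^{-k(n+\varepsilon)}$, which follows from the limit hypothesis for $k$ large, while the finitely many inner annuli are handled by the $L^p$ hypothesis.
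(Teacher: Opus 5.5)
Your proof is correct. It uses the same mechanism as the paper: a dyadic decomposition at infinity in which the hypothesis $\int f=0$ turns the mean corrections into tail integrals of size $O(2^{-k\varepsilon})$. The bookkeeping, however, is different.

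The paper first splits $f=f_0+f_1$. Here $f_0=\chi f-a\eta$ is compactly supported, in $L^p$ and has zero integral, hence lies in $H^1$; $f_1$ vanishes on $B(0,M)$. It then proves Lemma~\ref{lem:H1large} for $f_1$ using smooth cutoffs with a \emph{cumulative} mean correction, $g_k=\psi_k\bigl(f-\int f\psi_k/\int\psi_k\bigr)$. The consecutive differences $g_{2^{j+1}}-g_{2^j}$ are directly $O(2^{-j\varepsilon})$ multiples of $L^\infty$ atoms. The estimate $|\int f\psi_k|=|\int f(1-\psi_k)|\lesssim k^{-\varepsilon}$ plays exactly the role of your bound $|M_k|\lesssim 2^{-k\varepsilon}$.

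You instead correct each annulus locally by $\mu_k\phi_k$ and recover the cumulative masses afterwards through Abel summation. Grouping your terms as $f\chi_{A_k}-\bigl(M_k\phi_k-M_{k-1}\phi_{k-1}\bigr)$ gives precisely the paper's form, so the paper's construction is your argument already summed by parts.

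What each version buys:
\begin{itemize}
\item Yours handles all scales in one decomposition, with the finitely many inner annuli absorbing the $L^p$ hypothesis. It uses $(1,p)$-atoms throughout, and it never needs a pointwise bound on a modified tail function.
\item The paper's uses only $L^\infty$ atoms for the tail, matching its Definition~\ref{def:atom}. It also isolates the quantitative bound $\|f_1\|_{H^1}\lesssim M^{-\varepsilon}$ as a separate lemma. It still relies on the $(1,p)$-atom fact for the compact piece $f_0$.
\end{itemize}
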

\begin{remark}
The condition on the limit of $|x|^{n+\varepsilon}|f(x)|$ comes into play to prove that the function~$f$ is in $H^1$ (it is a sufficient, not necessary, condition, that arises in a natural way from the assumption~\eqref{eq:mder}). Indeed, there exist functions in $L^1\cap L^\infty$, with zero integral, that are not in $H^1$.
\end{remark}
The proof of Lemma~\ref{lem:H1} is based on the following.
\begin{lemma}\label{lem:H1large}
Let $f\in L^1(\R^n)$, with $f(x)=0$ in $B(0,M)=\{x\in \R^n: \ |x|\leq M\}$ and $|f(x)|\leq |x|^{-n-\varepsilon}$ for some $\varepsilon>0$. Assume that $f$ has zero integral. Then $f\in H^1(\R^n)$ and
\[ \|f\|_{H^1(\R^n)}\leq CM^{-\varepsilon}, \]
for some $C>0$, independent on $M$.
\end{lemma}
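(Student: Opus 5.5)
The plan is an atomic decomposition of $f$ along dyadic annuli, with the mean of each piece transferred outward by telescoping. Take $M$ large (for $M\lesssim1$ the bound reduces to $\|f\|_{H^1}\le C$). Set $A_k=\{2^kM<|x|\le 2^{k+1}M\}$ for $k\ge0$ and $f_k=f\chi_{A_k}$, so that $f=\sum_k f_k$ in $L^1$, with
\[ \|f_k\|_{L^\infty}\le (2^kM)^{-n-\varepsilon},\qquad |\lambda_k|\le \|f_k\|_{L^1}\le C(2^kM)^{-\varepsilon}, \]
where $\lambda_k=\int f_k$. The pieces $f_k$ do not have zero mean, so fix a normalized bump $\phi_k=|A_k|^{-1}\chi_{A_k}$ and define $\Lambda_k=\sum_{j\ge k}\lambda_j$. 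Then $\Lambda_0=\int f=0$ and $|\Lambda_k|\le C(2^kM)^{-\varepsilon}$, because the tail of a geometric series is comparable to its first term.

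Write
\[ f=\sum_k \bigl(f_k-\lambda_k\phi_k\bigr)+\sum_k \lambda_k\phi_k, \]
and, using $\lambda_k=\Lambda_k-\Lambda_{k+1}$ with $\Lambda_0=0$, perform Abel summation:
\[ \sum_k\lambda_k\phi_k=\sum_{k\ge1}\Lambda_k(\phi_k-\phi_{k-1}). \]
Each term has mean zero and is supported in a ball.

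\emph{First sum.} The function $g_k=f_k-\lambda_k\phi_k$ is supported in $B(0,2^{k+1}M)$, has zero integral, and satisfies $\|g_k\|_\infty\le C(2^kM)^{-n-\varepsilon}$. Hence $g_k$ is $c_k=C(2^kM)^{-\varepsilon}$ times an $L^\infty$-atom for the ball $B(0,2^{k+1}M)$, and $\|g_k\|_{H^1}\le C(2^kM)^{-\varepsilon}$.

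\emph{Second sum.} The function $\phi_k-\phi_{k-1}$ has mean zero, is supported in $B(0,2^{k+1}M)$, and has sup norm $\lesssim (2^kM)^{-n}$, so it is a bounded multiple of an atom. Therefore $\|\Lambda_k(\phi_k-\phi_{k-1})\|_{H^1}\le C(2^kM)^{-\varepsilon}$.

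Summing the geometric series over $k$ gives absolute convergence in $H^1$, and
\[ \|f\|_{H^1}\le C M^{-\varepsilon}\sum_k 2^{-k\varepsilon}=C'M^{-\varepsilon}. \]
The series converges to $f$ in $L^1$, and since it also converges in $H^1\subset L^1$, the two limits coincide. This identifies the $H^1$ limit as $f$.

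The main care point is the treatment of the means. Here one uses $\int f=0$ through $\Lambda_0=0$, which makes the Abel summation start at $k=1$. One also needs the tail bound on $\Lambda_k$, which requires $\varepsilon>0$.
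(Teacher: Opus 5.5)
Your proof is correct and is essentially the paper's argument. Group your pieces as $g_k+\Lambda_k(\phi_k-\phi_{k-1})$ (with $\Lambda_0=0$); these are exactly $G_k-G_{k-1}$ for the mean-zero truncations $G_k=f\chi_{B(0,2^{k+1}M)}+\Lambda_{k+1}\phi_k$. The $G_k$ are the sharp-cutoff counterparts of the paper's $g_{2^j}=\psi_{2^j}\bigl(f-\int f\psi_{2^j}/\int\psi_{2^j}\bigr)$, and both are controlled by the same tail bound $|\int_{|x|\geq R}f|\lesssim R^{-\varepsilon}$ that comes from $\int f=0$.

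Your preliminary reduction to large $M$ is unnecessary, and the claim that $\|f\|_{H^1}\le C$ for $M\lesssim1$ is false uniformly in $M$. Nothing depends on it, though, since every estimate you wrote holds verbatim for all $M>0$.
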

We postpone the proofs of Lemmas~\ref{lem:H1} and~\ref{lem:H1large} to~\textsection\ref{sec:H1}.

\section{Notation and tools from Fourier analysis}

In this paper, $H^1(\R^n)$ denotes the Hardy space on $\R^n$. There are several descriptions of this space and its norm, we focus on two of them.
\begin{definition}[see, for instance, \cite{Grafakos-2024}]\label{def:atom}
An $H^1$ atom is a function $a\in L^1(\R^n)$ such that:
\begin{itemize}
  \item $a$ is supported in a cube $Q$ (with sides parallel to the axes);
  \item $|a(x)|\leq |Q|^{-1}$ for all $x\in Q$;
  \item $a$ has zero integral.
\end{itemize} 
\end{definition}
Using the atomic decomposition, we may give one of the equivalent definitions of the space $H^1(\R^n)$.
\begin{definition}\label{def:atomic}
Assume that $f$ is a measurable function on $\R^n$ such that
\[ f(x) = \sum_{j=0}^\infty \lambda_j a_j(x), \quad \text{a.e., with}\qquad \sum_{j=0}^\infty |\lambda_j|<\infty, \]
where $a_j$ are $H^1$-atoms. Then we say that $f\in H^1(\R^n)$ and we define the norm
\begin{equation}\label{eq:normH1}
\|f\|_{H^1}=\inf \{\sum_{j=0}^\infty |\lambda_j|: \ f=\sum_{j=0}^\infty \lambda_j a_j(x), \ \text{where $a_j$ are $H^1$ atoms}\}.
\end{equation}
\end{definition}
We use the following notation for multi-indexes $\alpha=(\alpha_1,\ldots,\alpha_n)$ in $\N^n$ and related polynomials and derivatives:
\[ |\alpha|=\alpha_1+\ldots+\alpha_n, \qquad x^\alpha=x_1^{\alpha_1}\cdot\ldots\cdot x_n^{\alpha_n}, \qquad \partial_\xi^\alpha = \partial_{\xi_1}^{\alpha_1}\ldots \partial_{\xi_n}^{\alpha_n}. \]
In this paper, we use the following notation for the Fourier transform. For any function $f\in L^1(\R^n)$, we set
\[ \mathscr{F}f(\xi) = \int_{\R^n} e^{-ix\cdot\xi} f(x)\,dx. \]
Then $\mathscr{F}$ is an isomorphism on the Schwartz space $\mathcal S(\R^n)$, with inverse $\mathscr{F}^{-1}\varphi=(2\pi)^{-n}\mathscr{F}\check{\varphi}$, where $\check{\varphi}(x)=\varphi(-x)$.

The definition is extended to the space $\mathcal S'(\R^n)$ of tempered distribution, by setting
\[ \<\mathscr{F}T, \varphi\> = \<T, \mathscr{F}\varphi\>, \qquad \varphi\in\mathcal S(\R^n). \]
For any $T\in\mathcal S'(\R^n)$, we have
\[ \mathscr{F} (\partial_x^\alpha T) = (i\xi)^\alpha \mathscr{F}T, \qquad \mathscr{F} (x^\alpha T) = (-i\partial_\xi)^\alpha \mathscr{F}T, \]
where multiplication by monomials and derivatives are intended in distributional sense. Moreover, the inversion formula and Riemann-Lebsegue theorem hold in $\mathcal S'(\R^n)$.
\begin{theorem}\label{thm:inversion}
Let $T\in\mathcal S'(\R^n)$, with $\mathscr{F}T\in L^1(\R^n)$. Then~$T\in\mathcal C_0(\R^n)$ and
\[ T(x) = (2\pi)^{-n}\,\int_{\R^n} e^{ix\cdot\xi} (\mathscr{F}T)(\xi)\,d\xi. \]
\end{theorem}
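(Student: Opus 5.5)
The plan is to identify $T$ with the explicit function
\[ g(x) = (2\pi)^{-n}\int_{\R^n} e^{ix\cdot\xi}(\mathscr{F}T)(\xi)\,d\xi. \]
First I show that $g$ lies in $\mathcal C_0(\R^n)$. Then I show that $T=g$ as tempered distributions by testing against Schwartz functions, using only the definition of $\mathscr{F}$ on $\mathcal S'$ and Fubini's theorem.

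\emph{Step 1: $g\in\mathcal C_0(\R^n)$.} Since $\mathscr{F}T\in L^1$, the integrand is bounded by $|\mathscr{F}T(\xi)|$ uniformly in $x$. Dominated convergence then shows that $g$ is continuous, and that $\|g\|_{L^\infty}\leq(2\pi)^{-n}\|\mathscr{F}T\|_{L^1}$. For the decay at infinity I use density. Pick $h_k\in\mathcal S(\R^n)$ with $h_k\to\mathscr{F}T$ in $L^1$. The functions
\[ g_k(x)=(2\pi)^{-n}\int e^{ix\cdot\xi}h_k(\xi)\,d\xi=\mathscr{F}^{-1}h_k(x) \]
are Schwartz, since $\mathscr{F}$ is an isomorphism of $\mathcal S$. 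In particular each $g_k$ vanishes at infinity. By the $L^\infty$ bound above applied to the difference, $\|g-g_k\|_{L^\infty}\leq(2\pi)^{-n}\|\mathscr{F}T-h_k\|_{L^1}\to0$. A uniform limit of $\mathcal C_0$ functions is in $\mathcal C_0$, so $g\in\mathcal C_0(\R^n)$. This is the Riemann--Lebesgue part, and it is the only step needing an approximation argument.

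\emph{Step 2: $T=g$.} Fix $\varphi\in\mathcal S(\R^n)$ and set $\psi=\mathscr{F}^{-1}\varphi\in\mathcal S$, so that $\mathscr{F}\psi=\varphi$. By the definition of the Fourier transform on $\mathcal S'$,
\[ \<T,\varphi\>=\<T,\mathscr{F}\psi\>=\<\mathscr{F}T,\psi\>=\int_{\R^n}(\mathscr{F}T)(\xi)\,\psi(\xi)\,d\xi . \]
The last equality holds because $\mathscr{F}T$ is an $L^1$ function acting on the bounded function $\psi$. Next I write $\psi(\xi)=(2\pi)^{-n}\int e^{ix\cdot\xi}\varphi(x)\,dx$. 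The double integrand is bounded by $|\mathscr{F}T(\xi)|\,|\varphi(x)|$, which lies in $L^1(\R^{2n})$, so Fubini's theorem applies. Exchanging the order of integration gives
\[ \<T,\varphi\>=\int_{\R^n}\varphi(x)\,(2\pi)^{-n}\int_{\R^n}e^{ix\cdot\xi}(\mathscr{F}T)(\xi)\,d\xi\,dx=\int_{\R^n} g(x)\varphi(x)\,dx. \]
Hence $T=g$ in $\mathcal S'$, and the inversion formula holds pointwise with $T\in\mathcal C_0(\R^n)$.

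The only point requiring care is matching the normalization: $\mathscr{F}^{-1}\varphi=(2\pi)^{-n}\mathscr{F}\check\varphi$ must be written as $(2\pi)^{-n}\int e^{ix\cdot\xi}\varphi(x)\,dx$. I expect the main (mild) obstacle to be Step~1's decay, which is handled by the density argument above.
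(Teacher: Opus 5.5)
Your proof is correct and complete. The paper does not prove this statement: it quotes it as a standard fact (``the inversion formula and Riemann--Lebesgue theorem hold in $\mathcal S'(\R^n)$'') and uses it as a black box, applied to $(-\Delta)^{\frac n2}m$ and $(-\Delta)^{\frac{n-1}2}\nabla m$ in the proof of Theorem~\ref{thm:main}. So there is no proof in the paper to compare against.

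Your argument is the standard one. Duality ($\langle T,\mathscr{F}\psi\rangle=\langle\mathscr{F}T,\psi\rangle$ with $\psi=\mathscr{F}^{-1}\varphi$) together with Fubini identifies $T$ with $g$. Density of $\mathcal S$ in $L^1$ and the uniform bound $\|g\|_{L^\infty}\le(2\pi)^{-n}\|\mathscr{F}T\|_{L^1}$ then give $g\in\mathcal C_0(\R^n)$. Your normalization $\mathscr{F}^{-1}\varphi(\xi)=(2\pi)^{-n}\int e^{ix\cdot\xi}\varphi(x)\,dx$ and the bilinear pairing both agree with the paper's conventions.
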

We are also interested in an equivalent definition of $H^1(\R^n)$ and its norm, that involves the Fourier transform, and that is interest for the application to the partial differential equations.
\begin{definition}
For $j=1,\ldots,n$, the Riesz transform $R_j$ are defined for any $\varphi\in\mathcal S(\R^n)$ and for suitable $c_n>0$ as
\[ R_j\varphi = c_n\,\PV \frac{x_j}{|x|^{n+1}}\ast \varphi; \quad\text{then,}\quad \mathscr{F}(R_j\varphi)=i\frac{\xi_j}\xii\,\mathscr{F}\varphi. \]
\end{definition}
It is easy to check that Riesz transforms may be extended as linear bounded functionals on $L^2(\R^n)$. By the Theorem of Riesz-Kolmogorov, they can be extended to $L^p(\R^n)$ as well, for $p\in(1,\infty)$. The Riesz transforms cannot be extended to a bounded linear functional on $L^1(\R^n)$, but they can be extended to bounded linear functionals from $L^1$ to $L^{1,\infty}$.
\begin{theorem}\label{thm:HardyR}
Let $f\in L^1(\R^n)$. Then $f\in H^1(\R^n)$ if, and only if, $R_jf\in L^1$ for any $j=1,\ldots,n$, where $R_j$ are the Riesz transforms. Moreover,
\[ \|f\|_{H^1} \sim \|f\|_{L^1}+\sum_{j=0}^n \|R_jf\|_{L^1}. \]
\end{theorem}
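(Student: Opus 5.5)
The statement is the classical Fefferman--Stein characterization of $H^1(\R^n)$ by Riesz transforms. I would prove the two implications separately, working with the atomic Definition~\ref{def:atomic} for one direction and with a maximal-function description of $H^1$ for the other. The equivalence of the maximal and atomic descriptions is the Coifman--Latter atomic decomposition theorem, which I would quote from~\cite{Grafakos-2024} rather than reprove.

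\emph{Atoms have integrable Riesz transforms.} Let $a$ be an $H^1$ atom supported in a cube $Q$ with center $x_Q$ and side $\ell$, and let $Q^*=2\sqrt n\,Q$.
\begin{itemize}
\item On $Q^*$, I would use Cauchy--Schwarz and the $L^2$-boundedness of $R_j$:
\[ \|R_ja\|_{L^1(Q^*)}\leq |Q^*|^{1/2}\|a\|_{L^2}\leq C|Q|^{1/2}|Q|^{-1/2}=C. \]
\item Off $Q^*$, I would use $\int a=0$ to write
\[ R_ja(x)=c_n\int_Q \Bigl(\frac{x_j-y_j}{|x-y|^{n+1}}-\frac{(x-x_Q)_j}{|x-x_Q|^{n+1}}\Bigr)a(y)\,dy. \]
The mean value theorem bounds the kernel difference by $C\ell|x-x_Q|^{-n-1}$. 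Integrating over $|x-x_Q|\gtrsim\ell$ then gives a bound independent of $Q$.
\end{itemize}
Hence $\|R_ja\|_{L^1}\leq C$ uniformly over atoms. For $f=\sum\lambda_ja_j$, the series $\sum\lambda_j R_ka_j$ converges in $L^1$. It agrees with $R_kf$ because $R_k$ is continuous from $L^1$ to $L^{1,\infty}$. Taking the infimum over decompositions yields $\|f\|_{L^1}+\sum_k\|R_kf\|_{L^1}\leq C\|f\|_{H^1}$.

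\emph{Integrable Riesz transforms imply $f\in H^1$.} This is the main obstacle. Set $f_0=f$ and $f_j=R_jf$, all in $L^1$, and let $u_j(x,t)=P_t\ast f_j$ be the Poisson extensions to $\R^{n+1}_+$. Using $\mathscr F(R_jf)=i\xi_j\xii^{-1}\mathscr Ff$, I would check on the Fourier side that $F=(u_0,u_1,\dots,u_n)$ satisfies the generalized Cauchy--Riemann system:
\[ \partial_tu_0+\sum_j\partial_{x_j}u_j=0,\qquad \partial_{x_k}u_j=\partial_{x_j}u_k,\qquad \partial_tu_j=\partial_{x_j}u_0 . \]
In other words, $F$ is the gradient of a harmonic function. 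The key analytic input is the Stein--Weiss lemma: $|F|^q$ is subharmonic for $q\geq (n-1)/n$. I would prove it by computing $\Delta|F|^q$ and using that the Jacobian of $F$ is symmetric and traceless, which gives the sharp inequality $|\nabla|F||^2\leq\frac{n}{n+1}|\nabla F|^2$.

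Fix $q\in[(n-1)/n,1)$. Then $|F(\cdot,t+s)|^q$ is dominated by the Poisson integral of $|F(\cdot,s)|^q$. These boundary traces are bounded in $L^{1/q}$ uniformly in $s$, since $\sup_s\|F(\cdot,s)\|_{L^1}\leq \|f\|_{L^1}+\sum\|R_jf\|_{L^1}$. Weak-$*$ compactness in $L^{1/q}$ with $1/q>1$ produces a limit $h$ with $|F(\cdot,t)|^q\leq P_t\ast h$. The Hardy--Littlewood maximal theorem in $L^{1/q}$ then gives
\[ \sup_{t>0}|u_0(\cdot,t)|\in L^1,\qquad \text{with norm} \lesssim \|f\|_{L^1}+\sum_j\|R_jf\|_{L^1}. \]
So $f$ lies in the Poisson-maximal $H^1$. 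The atomic decomposition theorem then places $f$ in $H^1$ in the sense of Definition~\ref{def:atomic}, with the reverse norm inequality. The delicate points are the subharmonicity exponent, which must be below $1$ to reach $L^1$ control, and the limiting argument as $s\to0$.
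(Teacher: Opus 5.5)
The paper does not prove this statement: it is stated without proof in Section~2 as a standard tool, the classical Fefferman--Stein characterization, and is used only to note that $H^1$ functions have zero integral. Your proposal is the standard proof, and most of it is sound:
\begin{itemize}
\item the uniform bound $\|R_ja\|_{L^1}\le C$ on atoms, and its transfer to $f=\sum\lambda_ja_j$ via the weak-type $(1,1)$ bound;
\item the Stein--Weiss computation giving subharmonicity of $|F|^q$ for $q\ge (n-1)/n$;
\item the majorization at positive heights and the maximal-function estimate.
\end{itemize}
One citation detail: going from the Poisson maximal function to atoms needs the Fefferman--Stein maximal characterization as well as Latter's decomposition.

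The step that needs more justification is the identity $\mathscr F(R_jf)=i\xi_j\xii^{-1}\mathscr Ff$ for $f\in L^1$. In the first half you, like the paper, take $R_jf$ to be the weak-type extension $L^1\to L^{1,\infty}$. In the second half you apply the Fourier formula to this same object, to get that $(P_t*f,P_t*R_1f,\dots,P_t*R_nf)$ is a Cauchy--Riemann system. That is what gives your bound $\sup_s\|F(\cdot,s)\|_{L^1}\le\|f\|_{L^1}+\sum_j\|R_jf\|_{L^1}$.

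What is immediate is weaker. Let $Q^{(j)}_s$ be the conjugate Poisson kernel, so that $Q^{(j)}_s*f=R_j(P_s*f)$. These functions tend to $\mathscr F^{-1}(i\xi_j\xii^{-1}\mathscr Ff)$ in $\mathcal S'$, but only in measure to $R_jf$. The two limits may a priori differ by something invisible to convergence in measure (compare $s^{-n}\varphi(x/s)\to\delta$). Excluding this under $R_jf\in L^1$ is a genuine theorem; for $n=1$ on the circle it is the classical fact that an integrable conjugate function has the conjugate Fourier series.

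There are two easy repairs.
\begin{itemize}
\item Define $R_jf:=\mathscr F^{-1}(i\xi_j\xii^{-1}\mathscr Ff)\in\mathcal S'$ from the outset. Your second half is then correct as written. In the first half, $R_kf=\sum_j\lambda_jR_ka_j$ follows from the uniform convergence of $\sum_j\lambda_j\mathscr Fa_j$.
\item Keep the weak-type reading and take $u_j=Q^{(j)}_t*f$, for which the Cauchy--Riemann system is immediate. You lose the uniform $L^1$ bound, but weak type $(1,1)$ still gives $\sup_s\||F(\cdot,s)|^q\|_{L^{1/q,\infty}}\lesssim\|f\|_{L^1}^q$. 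Along some sequence $s_k\to0$ you also have $F(\cdot,s_k)\to F^*=(f,R_1f,\dots,R_nf)$ a.e.\ Since $1/q>1$, the weak bound makes $|F(\cdot,s_k)|^q$ uniformly integrable on compact sets. The tails of $P_t(x-\cdot)$ are small in $L^{(1/q)',1}$. Hence $P_t*|F(\cdot,s_k)|^q(x)\to P_t*|F^*|^q(x)$, which gives $|F(x,t)|^q\le P_t*|F^*|^q(x)$ with $|F^*|^q\in L^{1/q}$. The rest of your argument then applies verbatim, and it shows a posteriori that the Fourier identity does hold.
\end{itemize}
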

As a consequence of Theorem~\ref{thm:HardyR} and Inversion Theorem~\ref{thm:inversion} it follows that if $f\in H^1(\R^n)$, then $\mathscr{F}(f)=0$, that is, $f$ has zero integral. Also, the Riesz transforms are bounded operators on $H^1$.

We recall the basics of the multipliers theory introduced in H\"ormander's paper~\cite{Hormander-1960}. A tempered distribution $m\in\mathcal S'(\R^n)$ is said to be a multiplier from $L^p$ to $L^q$, $m\in M(L^p,L^q)$ if, and only if, $\mathscr{F}^{-1}m\ast$ is a bounded linear operator from $L^p$ to $L^q$. Similarly, if we replace $L^1$ by $H^1$ or $L^\infty$ by $\BMO$. By Plancherel's theorem $M_2^2=L^\infty$ and since $M_{q'}^{p'}=M_p^q$, by interpolations, it follows that
\[ M(L^1,L^1)\subset M(L^p,L^p) \subset M(L^2,L^2)=L^\infty, \quad p\in[1,\infty]. \]
Moreover,
\[ M(L^1,H^1) \subset M(L^1,L^1)\subset M(H^1,H^1)=M(H^1,L^1)\subset M(L^p,L^p),\quad p\in(1,\infty).\]
When the first space is $L^1$, the Young theorem is optimal, in the sense that:
\[\begin{split}
M(L^1,L^q)&=\mathscr{F}(L^q), \qquad q\in(1,\infty],\\
M(L^1,L^1)&=\mathscr{F}(\{\text{bounded measures}\}),\\
M(L^1,H^1)&=\mathscr{F}(H^1).
\end{split}\]

\section{Proof of Theorem~\ref{thm:main}}\label{sec:proofmain}

\begin{proof}
We assume with no restriction that $\varepsilon\in(0,1)$. Assume that $n$ is even and write
\[ (-\Delta)^{\frac{n}2} m  = \mathscr{F} (|x|^nT). \]
Since $(-\Delta)^{\frac{n}2} m\in L^1$, by Theorem~\ref{thm:inversion} for tempered distributions, there exists $g\in\mathcal C_0(\R^n)$ such that
\[ \begin{split}
& |x|^nT = g\qquad \text{in $\mathcal S'(\R^n)$, that is,}\\
& \<T,|x|^n \varphi\> = \int_{\R^n} g \varphi dx, \quad \varphi\in\mathcal S(\R^n),
\end{split} \]
and the Fourier inversion formula holds:
\[ \begin{split}
g
    & = (2\pi)^{-n}\,(I_0(x)+I_1(x)), \\
I_0(x)
    & =\int_{B(0,|x|^{-1})} e^{ix\cdot\xi} ((-\Delta)^{\frac{n}2} m)(\xi)\,d\xi, \\
I_1(x)
    & =\int_{B^c(0,|x|^{-1})} e^{ix\cdot\xi} ((-\Delta)^{\frac{n}2} m)(\xi)\,d\xi.
\end{split} \]
We now prove that
\[ f = \frac{g}{|x|^n} \in L^1(\R^n), \]
obtaining suitable estimates for $g(x)$ (clearly, $f\in\mathcal C(\R^n\setminus\{0\})$). In particular,
\[ \begin{split}
|I_0(x)|
    & \leq C\int_{B(0,|x|^{-1})} \xii^{\varepsilon-n}\,\<\xi\>^{-2\varepsilon}\,d\xi\leq C'\, |x|^{-\varepsilon}, \qquad |x|\ge1,\\
|I_1(x)|
    & \leq C\int_{B^c(0,|x|^{-1})} \xii^{\varepsilon-n}\,\<\xi\>^{-2\varepsilon}\,d\xi\leq C'\, |x|^\varepsilon, \qquad |x|\le1.
\end{split} \]
By performing one step of integration by parts, we get
\[ \begin{split}
I_0(x)
    & = -i \sum_{j=1}^n x_j \int_{B(0,|x|^{-1})} e^{ix\cdot\xi} (\partial_{\xi_j}(-\Delta)^{\frac{n}2-1}m)(\xi)\,d\xi \\
    & \qquad + \sum_{j=1}^n \int_{\partial B(0,|x|^{-1})} e^{ix\cdot\xi} (\partial_{\xi_j}(-\Delta)^{\frac{n}2-1}m)(\xi)\,\nu_j\,d\sigma(\xi);\\
|I_0(x)|
    & \leq C\,|x|\,\int_{B(0,|x|^{-1})} \xii^{\varepsilon-(n-1)}\,\<\xi\>^{-2\varepsilon}\,d\xi \\
    & \qquad + C\,\int_{\partial B(0,|x|^{-1})} \xii^{\varepsilon-(n-1)}\,\<\xi\>^{-2\varepsilon}\,d\sigma(\xi) \\
    & \leq C' |x|^\varepsilon \qquad |x|\le1.
\end{split} \]
Similarly,
\[ \begin{split}
I_1(x)
    & = i|x|^{-2}\sum_{j=1}^n x_j \left(\int_{B(0,|x|^{-1})} e^{ix\cdot\xi} (\partial_{\xi_j}(-\Delta)^{\frac{n}2} m)(\xi)\,d\xi \right.\\
    & \qquad\qquad\qquad\qquad \left. -\int_{\partial B(0,|x|^{-1})} e^{ix\cdot\xi} ((-\Delta)^{\frac{n}2} m)(\xi)\,\nu_j\,d\sigma(\xi) \right); \\
|I_1(x)|
    & \leq C|x|^{-1} \int_{B^c(0,|x|^{-1})} \xii^{\varepsilon-n-1}\,\<\xi\>^{-2\varepsilon}\,d\xi \\
    & \qquad + C|x|^{-1}\int_{\partial B(0,|x|^{-1})} \xii^{\varepsilon-n}\,\<\xi\>^{-2\varepsilon}\,d\sigma(\xi) \\
    & \leq C\,|x|^{-\varepsilon}, \qquad |x|\ge1.
\end{split} \]
Therefore, we proved that $|g(x)|\leq C_1|x|^\varepsilon\<x\>^{-2\varepsilon}$; hence,
\begin{equation}\label{eq:f}
f\in L^1(\R^n), \qquad |f(x)|\leq C_1|x|^{\varepsilon-n}\<x\>^{-2\varepsilon}.
\end{equation}
By the fact that
\[ |x|^n (T-f) =0, \qquad \text{in $\mathcal S'(\R^n)$,} \]
there exists $T_0$, supported at the origin, such that $T = f+T_0$. Since $T_0$ is supported at the origin, $T_0=\sum_{|\alpha|\leq \kappa} a_\alpha \partial_x^\alpha \delta$ for some $\{a_\alpha\}$ and $\kappa$. But~\eqref{eq:mder} implies that $a_\alpha=0$ for any $|\alpha|\geq n-1$. Indeed, $\mathscr{F} (\partial_x^\alpha\delta)=(i\xi)^\alpha$ and $\partial_\xi^{\beta}\xi^\alpha$ is a nonzero monomial of degree $|\alpha|-(n-1)$ for any $\beta\leq\alpha$ (that is, $\beta_j\leq\alpha_j$ for $j=1,\ldots,n$). In particular, if $|\alpha|\geq n-1$, choosing $|\beta|=n-1$, \eqref{eq:mder} holds only if $a_\alpha=0$. Therefore, we have obtained~\eqref{eq:mT0}.

Now let $m(0)=a_0$; then, $\mathscr{F}f(0)=0$, that is, the integral of $f$ is zero. Due to the fact that $f\in L^p(\R^n)$ for any $1\leq p<n/(n-\varepsilon)$, we may apply Lemma~\ref{lem:H1} and obtain $f\in H^1(\R^n)$.

\bigskip

We now consider the case of $n$ odd. In this case, we write
\[ (-\Delta)^{\frac{n-1}2} \nabla m  = \mathscr{F} (ix |x|^{n-1}T). \]
Then, for any $j=1,\ldots,n$, $x_j|x|^{n-1}T =g_j$ in $\mathcal S'(\R^n)$ with $g_j\in\mathcal C_0(\R^n)$. Then $|x|^{n-1}T=g_j/x_j$ on $U_j=\{x\in\R^n: \ x_j\neq0\}$ for any $j=1,\ldots,n$. Moreover,
\[ \frac{g_j}{x_j}=\frac{g_k}{x_k} \qquad x\in U_j\cap U_k. \]
We may define $g$ piecewisely on $\R^n\setminus\{0\}$ as $g=g_j/x_j$ when $x_j\neq0$. Let $f=g/|x|^{n-1}$. Then $f$ is as in~\eqref{eq:f}.

Since $|x|^{n-1}(T-f)=0$ in $\mathcal S'(\R^n)$, we find once again $T=f+T_0$, and the rest of the proof follows as for the case of $n$ even.

This concludes the proof.
\end{proof}

\section{Proof of Lemmas~\ref{lem:H1} and~\ref{lem:H1large}}\label{sec:H1}

We first prove Lemma~\ref{lem:H1}.
\begin{proof}
Let $M\gg1$ be such that
\[ |f(x)|\leq |x|^{-n-\varepsilon},\qquad |x|\geq M. \]
We decompose $f=f_0+f_1$, where $f_0$ and $f_1$ have zero integral and:
\begin{itemize}
\item $f_0$ has compact support;
\item $f_1=0$ in the ball $B(0,M)$.
\end{itemize}
For instance, let $\chi,\eta\in \mathcal C_c(\R^n)$ be such that $\chi=1$ and $\eta=0$ in $B(0,M)$, with $\int \eta=1$. Setting
\[ f_0 = \chi f - a\eta, \qquad a = \int \chi f,\]
both $f_0$ and $f_1=f-f_0$ verify the desired properties.

We notice that $f_0\in L^p(\R^n)$, with compact support, so that $f_0\in H^1$ (see, for instance, \cite[\textsection 1.2.4]{stein1993harmonic}). On the other hand, $f_1\in H^1$ by Lemma~\ref{lem:H1large} and this concludes the proof.
\end{proof}
We now prove Lemma~\ref{lem:H1large}.
\begin{proof}
We use the atomic decomposition. Let
\[ g_k = \psi_k \left( f - \frac{\int f\psi_k}{\int \psi_k}\right),  \]
with $\psi_k(x)=\psi(k^{-1}x)$, $\psi\in\mathcal C_c^\infty$, supported in $B(0,1)$ such that $\psi=1$ on $B(0,1/2)$. In particular,
\[ \int\psi_k = k^n\int\psi. \]
We notice that $g_k$ has zero integral and that
\[ \int f\psi_k = - \int f(1-\psi_k) \]
thanks to the fact that $f$ has zero integral. In particular,
\[ |\int f\psi_k| = |\int f(1-\psi_k)| \leq C \int_{|x|\geq k/2} |x|^{-n-\varepsilon}dx \sim k^{-\varepsilon}, \]
so that
\[ \frac{|\int f\psi_k|}{|\int \psi_k|} \leq Ck^{-n-\varepsilon}. \]
Then $g_{2^{j+1}}-g_{2^j}$ has zero integral, it is supported in $B(0,2^{j+1})\setminus B(0,2^{j-1})$, and
\[ |g_{2^{j+1}}-g_{2^j}| \leq C(2^j)^{-n-\varepsilon}\sim |B(0,2^{j+1})|^{-1}\,2^{-j\varepsilon}. \]
Due to $f(x)=0$ for $|x|\leq M$,
\[ f = \sum_{j\geq j_0} (g_{2^{j+1}}-g_{2^j}) \]
with $j_0\sim \log_2M$. Since
\[ g_{2^{j+1}}-g_{2^j} = \lambda_j h_j, \qquad |\lambda_j|\leq C2^{-j\varepsilon} \]
with $h_j$ atoms, the proof follows from
\[ \|f\|_{H^1}\leq \sum_{j\geq j_0} |\lambda_j|\leq C\,\sum_{j\geq j_0}2^{-j\varepsilon}\sim 2^{-j_0\varepsilon}\sim M^{-\varepsilon}. \]
\end{proof}

\bibliographystyle{plain}
\bibliography{ref.bib}

\end{document}